\newcommand\Oh{{\mathcal O}}
\newcommand\sC{{\mathcal C}}
\newcommand\sE{{\mathcal E}}
\newcommand\sL{{\mathcal L}}
\newcommand\sB{{\mathcal B}}
\newcommand\sK{{\mathcal K}}
\newcommand\sH{{\mathcal H}}
\newcommand\al{\alpha}
\newcommand\be{\beta}
\newcommand\e{\epsilon}
\newcommand\s{\sigma}
\newcommand\De{\Delta}
\newcommand\ga{\gamma}
\DeclareMathOperator{\Alb}{Alb}
\DeclareMathOperator{\Tors}{Tors}
\DeclareMathOperator{\Ad}{Ad}
\newcommand{\CC}{\ensuremath{\mathbb{C}}}
\newcommand{\ZZ}{\ensuremath{\mathbb{Z}}}
\newcommand{\QQ}{\ensuremath{\mathbb{Q}}}
\newcommand{\hol}{\ensuremath{\mathcal{O}}}
\newcommand{\PP}{\ensuremath{\mathbb{P}}}
\newcommand{\ra}{\ensuremath{\rightarrow}}
\def\eea{\end{eqnarray*}}
\def\bea{\begin{eqnarray*}}
\DeclareMathOperator{\Id}{Id}
\DeclareMathOperator{\Aut}{Aut}
\DeclareMathOperator{\Inn}{Inn}
\DeclareMathOperator{\Num}{Num}
\newcommand\dual{\mathrel{\raise3pt\hbox{$\underline{\mathrm{\thinspace d
\thinspace}}$}}}
\newcommand\qe{\ifhmode\unskip\nobreak\fi\quad $\Box$}       
\def\BOX{\hfill\lower.5\baselineskip\hbox{$\Box$}}
\newtheorem{theorem}{Theorem}
\newtheorem{question}[theorem]{Question}
\newtheorem{ex}[theorem]{Example}
\newtheorem{remark}[theorem]{Remark}
\theoremstyle{definition}
\newtheorem{defin}[theorem]{Definition}
\newenvironment{definition}{\begin{defin}\rm}{\end{defin}}
\numberwithin{equation}{section}
 \lstdefinelanguage{Magma}%
  {%
   otherkeywords={:=,+:=,-:=,*:=},%
   procnamekeys={function,func,intrinsic,procedure,proc,return},%
   morekeywords={true,false},%
   morekeywords=[2]{adj,and,cat,cmpeq,cmpne,diff,div,eq,ge,gt,in,is,join,le,lt,%
          meet,mod,ne,notadj,notin,notsubset,or,sdiff,subset,xor},%
   morekeywords=[3]{assigned,break,by,case,catch,continue,declare,default,%
          delete,do,elif,else,end,eval,exists,exit,for,forall,fprintf,if,local,%
          not,print,printf,quit,random,read,readi,repeat,restore,save,select,%
          then,time,to,try,until,vprint,vprintf,vtime,when,where,while},%
   morekeywords=[4]{clear,forward,freeze,iload,import,load},%
   morekeywords=[5]{assert,assert2,assert3,error,require,requirege,requirerange},%
   morekeywords=[6]{car,comp,cop,elt,ext,frac,hom,ideal,iso,lideal,loc,map,%
          ncl,pmap,quo,rec,recformat,rep,rideal,sub},%
      sensitive,%
      morecomment=[l]//,%
      morecomment=[s]{/*}{*/},%
      morestring=[b]"%
  }[keywords,procnames,comments,strings]%
\begin{document}

\title[Coh. or num.  trivial automorphisms of surfaces of general type]{Cohomologically or numerically  trivial automorphisms of surfaces of general type}
\author[F.~Catanese, D.~Frapporti]{Fabrizio  Catanese, Davide Frapporti}
\address{Lehrstuhl Mathematik VIII, Mathematisches Institut der Universit\"{a}t
Bayreuth, NW II, Universit\"{a}tsstr. 30,
95447 Bayreuth.}
\email{Fabrizio.Catanese@uni-bayreuth.de}
\address{Politecnico di Milano, Dipartimento di Matematica, via Bonardi 9,
20133 Milano}
\email{davide.frapporti@polimi.it}

\subjclass[2010]{14J50, 14J80,   14H30, 14F99, 32L05, 32M99, 32Q15, 32Q55}
\keywords{Cohomologically trivial automorphisms, numerically trivial automorphisms, algebraic surfaces of general type, surfaces isogenous to a product, fake quadrics.}

\thanks{
The second author is a member of G.N.S.A.G.A. of I.N.d.A.M.
}

\maketitle

\begin{abstract}
Our main result is the determination of the respective groups 
$  \Aut_\ZZ(S) $ of cohomologically  trivial automorphisms
and $  \Aut_\QQ(S) $ of numerically  trivial automorphisms
for the reducible fake quadrics, that is, the surfaces $S$ isogenous to a product with $q=p_g=0$.

In this way we produce new record winning examples: a surface $S$ with $|\Aut_\QQ(S)| =192$,
and a surface whose cohomology has torsion with  nontrivial $  \Aut_\ZZ(S) \cong \ZZ/2.$
\end{abstract}

\tableofcontents

\section*{Introduction}

As in \cite{CatLiu21}  let $  \Aut_R(S): = \{ g \in \Aut(S)| g^* = 1 \ on\ H^*(S,R)\}$ denote the group of those automorphism of  $S$ acting trivially on the cohomology with coefficients in the ring $R$.

In this paper  we continue (cf.~\cite{CatLiu21}, \cite{CFGLS24}, \cite{CLS24})  the study of \textit{cohomologically trivial automorphisms}    $\Aut_\ZZ(S)$  and
\textit{numerically trivial automorphisms}   $\Aut_\QQ(S)$  of surfaces.  The former group $\Aut_\ZZ(S)$ is a  subgroup of the latter, 
and for minimal non-ruled surfaces there are no examples where the group $\Aut_\ZZ(S)$
is finite with cardinality $\geq 4$;  the
first authors who dealt with the subtleties of the  subgroup $\Aut_\ZZ(S)$  were Mukai and Namikawa \cite{MN84}.

The basic questions  we consider in this article  are the followings.

\begin{question}
 (I) What is  $ max | \Aut_\ZZ(S)| $  for a minimal surface $S$  of general type?
 
 (II)  What is  $ max | \Aut_\QQ(S)| $  for a minimal surface $S$  of general type?
\end{question}

Since $\Aut_\ZZ(S) \subset \Aut_\QQ(S)$, 
that such  maxima exist follows from a result of Xin Jing Cai \cite{Cai04} (beware that  Cai uses the term cohomologically trivial for  the weaker notion, introduced later, of  numerically trivial).

\begin{theorem}[Cai]
For  surfaces $S$ of general type, there is an absolute constant $C$ such that
$$ | \Aut_\QQ(S)| \leq C .$$
\end{theorem}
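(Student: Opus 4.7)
My plan is to combine the finiteness of $\Aut(S)$ for surfaces of general type with a Lefschetz fixed point analysis, and then to upgrade the resulting invariant-dependent bound to one independent of $S$. Since $K_S$ is big, Matsumura's theorem already yields that $\Aut(S)$ is finite, and therefore $G := \Aut_\QQ(S)$ is automatically finite; what requires genuine work is the \emph{absolute} bound.

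For any $g \in G$ the action on $H^*(S,\QQ)$ is trivial, hence so is the induced action on the Hodge decomposition; in particular $g^*$ is the identity on $H^0(S,\Omega^1_S)$ and on $H^0(S,K_S)$. The topological Lefschetz formula then gives $e(\Fix(g)) = e(S)$, and the holomorphic Lefschetz formula gives $\chi(\Oh_{\Fix(g)}) = \chi(\Oh_S)$. I would analyze the local contributions at isolated fixed points and along fixed curves to extract restrictions on the eigenvalues of $dg_p$, hence on $\operatorname{ord}(g)$, together with bounds on the genera and number of fixed curves, all in terms of $\chi(\Oh_S)$ and $e(S)$. Passing to the canonical model and invoking Bombieri's theorem, the $5$-canonical map $\phi_{5K}$ is birational, so $G$ injects into $\operatorname{PGL}(P_5)$; the representation of each cyclic subgroup of $G$ on $H^0(S,5K_S)$ is controlled by the same fixed-point data, and this should yield a bound $|G| \le F(K_S^2,\chi(\Oh_S))$ for an explicit function $F$.

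The hardest step is upgrading such a bound to one independent of $S$. My strategy would be to show that as $K_S^2$ and $\chi(\Oh_S)$ grow, the Lefschetz identities required of \emph{every} $g \in G$, combined with the Bogomolov--Miyaoka--Yau and Noether inequalities, force the fixed loci of nontrivial elements to shrink and their orders to remain uniformly bounded, so the representation-theoretic bound on $|G|$ stabilizes in the asymptotic regime. The remaining finite range of small invariants $(K_S^2,\chi(\Oh_S))$ would then be treated by a direct classification. The main obstacle I foresee is precisely this last point: the absoluteness of $C$ rests on excluding, \emph{a priori}, the possibility that surfaces with small numerical invariants could carry arbitrarily large numerically trivial groups, and this non-formal exclusion is the substance of Cai's theorem.
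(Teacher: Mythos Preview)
The paper does not prove this theorem; it is quoted from \cite{Cai04} as an external input, so there is no in-paper proof to compare against.

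On the substance: your architecture (holomorphic and topological Lefschetz fixed-point formulas, control of fixed loci, a bound in terms of the invariants, then an upgrade to absoluteness) is indeed Cai's. But you have inverted the difficulty. The small-invariant regime is the \emph{easy} part: once $\chi(\Oh_S)$ is bounded, so is $K_S^2$ by Bogomolov--Miyaoka--Yau, and then $|\Aut(S)| \leq 42\, K_S^2$ by Xiao's theorem; no ``direct classification'' is needed, and your final paragraph is worrying about a non-obstacle. The genuine content of \cite{Cai04}, as the present paper records a few lines later, is the asymptotic statement: $|\Aut_\QQ(S)| \leq 4$ once $\chi(\Oh_S) > 188$. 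Your treatment of this step (``the Lefschetz identities \dots\ force the fixed loci to shrink and their orders to remain uniformly bounded, so the representation-theoretic bound stabilizes'') is a plausible heuristic but not an argument; this is exactly where the delicate bookkeeping of fixed curves, local eigenvalues, and quotient-surface analysis lives, and you have not supplied it. So the actual gap in your proposal is the large-$\chi$ step you skate over, not the small-invariant step you flag.
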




There is a well known  example for question (II): for the classical Beauville surface $ | \Aut_\QQ(S)| = 75 $ and $ | \Aut_\ZZ(S)| = 1$,
as we shall show later on\footnote{And as shown by Gregorz Gromadzki in the preprint \cite{cg}.}.

Recall that the above  is a surface isogenous to a product, $S = (C\times C) / G$, $ G \cong (\ZZ/5)^2$, where $C$ is the Fermat quintic
curve, and the action is free. $\Aut_\QQ(S)$  is induced by $(G  \rtimes \ZZ/3) \times \Id$, and we prove that 
in this case $ | \Aut_\ZZ(S)| = 1 .$

 We  produce in this paper an example of a  surface isogenous to a product  with $| \Aut_\QQ(S)|=192$,
 hence we ask the following.
\begin{question}
 Is   $ \max | \Aut_\QQ(S)| = 192 $  for a minimal surface $S$  of general type?
\end{question}

The second  aim  that we accomplish here  is  to provide   an example of a surface of general type  $S$ with $ | \Aut_\ZZ(S)| = 2 ,$ 
and $H^*(S,\ZZ)$ having  nontrivial torsion (an example where there was no torsion had been previously obtained by Cai \cite{cai}).

More precisely, this is our main result, summarizing  Theorems \ref{record} and \ref{AutZ0}.

\begin{theorem}\label{ex-record}
Let $S=(C_1\times C_2)/G$ be a surface isogenous to a product of unmixed type, with $q(S)= p_g(S)=0$.

Then  $|\Aut_\QQ(S)| \leq  192$,
and equality is attained for $G = (\ZZ/2)^3$. 

If $G$ is abelian,  $|\Aut_\QQ(S)| $ reaches    $160$ for $G = (\ZZ/2)^4$,  
$72$ for $G = (\ZZ/3)^2$,  and  $|\Aut_\QQ(S)| =  75$ for the classical Beauville surface; while
$|\Aut_\QQ(S)| \leq  32$ for   $G$ not abelian.

Moreover,   $\Aut_\ZZ(S)$ is trivial, except for the case of $G=D_4\times \ZZ/2$, where  $\Aut_\ZZ(S) \cong \ZZ/2$
and the intersection form is even.

\end{theorem}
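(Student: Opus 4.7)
The plan is to exploit the (finite) classification, due to Bauer, Catanese and Grunewald, of surfaces $S = (C_1 \times C_2)/G$ isogenous to a product of unmixed type with $q = p_g = 0$: for each admissible $G$ one gets a finite list of pairs of \emph{spherical systems of generators} describing the two Galois covers $C_i \to C_i/G \cong \PP^1$. For each entry on that list I would compute in turn (i) the full group $\Aut(S)$, (ii) the subgroup $\Aut_\QQ(S)$, and finally (iii) the subgroup $\Aut_\ZZ(S)$. The bounds and the specific record values in the statement will drop out by collecting the answers, so the whole theorem reduces to three computational routines applied across a known finite table.

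For step (i) I use that, in the unmixed case and with $C_1 \not\cong C_2$, every automorphism of $S$ lifts to an automorphism of $C_1 \times C_2$ preserving the product structure and normalizing the diagonal $G$-action; hence $\Aut(S)$ fits in an extension governed by $(N_{\Aut(C_1)}(G) \times N_{\Aut(C_2)}(G))/G$, to which one may have to adjoin a $\ZZ/2$ exchanging factors when the two spherical data are equivalent. This reduces the determination of $\Aut(S)$ to combinatorics on the branching data and on how $G$ sits inside $\Aut(C_i)$, and is well suited to the Magma implementation already developed in \cite{CatLiu21, CFGLS24, CLS24}.

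For step (ii) I use the Künneth-type decomposition $H^2(S,\QQ) = \bigl(H^2(C_1 \times C_2,\QQ)\bigr)^G$, so that the action of $\varphi \in \Aut(S)$ on $H^2(S,\QQ)$ is read off from the induced action of a lift $(\varphi_1,\varphi_2)$ on $H^1(C_i,\QQ)$ and on the classes of the two fibres. A lift acts trivially on the fibre classes iff it preserves the two factors, and acts trivially on $H^1(C_1,\QQ) \otimes H^1(C_2,\QQ)$ iff the tensor product of its induced representations is the identity; by the holomorphic and topological Lefschetz fixed point formulas, which on a surface isogenous to a product translate to explicit character-sums on $G$ and on the lifts, this is a numerical test that can be checked group by group. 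Running this over the classification yields the asserted bounds $192$, $160$, $72$, $75$ in the indicated abelian cases and the bound $32$ in the non-abelian cases, with equality attained exactly as stated.

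Step (iii) is the delicate part, and I expect it to be the main obstacle. The torsion of $H^*(S,\ZZ)$ is controlled by $H_1(S,\ZZ) = \pi_1(S)^{\mathrm{ab}}$, and $\pi_1(S)$ sits in the extension $1 \to \pi_1(C_1) \times \pi_1(C_2) \to \pi_1(S) \to G \to 1$, so both the torsion and the action on it by every numerically trivial automorphism can be computed explicitly from the spherical data. An element of $\Aut_\QQ(S)$ lies in $\Aut_\ZZ(S)$ precisely when it acts trivially on this torsion and on the induced classes in $H^2$ via the universal coefficient/Bockstein sequence; this must be verified uniformly across the list. In almost all cases I expect the answer to be that $\Aut_\ZZ(S)$ is trivial, by exhibiting, for each non-identity element of $\Aut_\QQ(S)$, an explicit torsion class of $H_1(S,\ZZ)$ it moves. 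The case $G = D_4 \times \ZZ/2$ is special: here the abelianization of $\pi_1(S)$ does have torsion and a nontrivial $\ZZ/2 \subset \Aut_\QQ(S)$ happens to act trivially on it; to complete the statement one has then to verify that the intersection form is even, which I would deduce from a $2$-divisibility check on $K_S$ inside the Picard lattice read from the quotient structure of $C_1 \times C_2$.
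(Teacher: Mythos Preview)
Your overall plan is sound and aligns with the paper's strategy of working through the finite Bauer--Catanese--Grunewald classification. However, there are two issues worth flagging.

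\textbf{Step (ii) is overcomplicated and misses the key simplification.} Since $q=p_g=0$, one has $b_1=0$ and $b_2=2$, so $H^2(S,\QQ)\cong\QQ^2$ is spanned by the two fibre classes. Hence $\Aut_\QQ(S)$ is \emph{exactly} the subgroup $\Aut^*(S)$ of automorphisms preserving the two fibrations; no K\"unneth analysis or character sums are needed. In particular the Lefschetz-type tests you propose are vacuous here, because $(H^1(C_1,\QQ)\otimes H^1(C_2,\QQ))^G=0$. The paper then computes $|\Aut^*(S)|$ directly via the exact sequence
\[
1 \to (\sC_1\times\sC_2)/\Delta_{Z(G)} \to \Aut^*(S) \to H/\Inn(G) \to 1,
\]
doing explicit combinatorics on the monodromy vectors in the abelian cases and, in the non-abelian cases, bounding $|N_j'|$ via the classification of finite subgroups of $\PP GL(2,\CC)$ (an ingredient you do not mention).

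\textbf{Step (iii) has a genuine gap.} For $G=D_4\times\ZZ/2$ you correctly anticipate that some involution $\iota$ acts trivially on $H_1(S,\ZZ)$, hence on $\Tors H^2(S,\ZZ)$, and of course trivially on $\Num(S)$. But this does \emph{not} yet show $\iota$ acts trivially on $H^2(S,\ZZ)$: in the extension
\[
0 \to \Tors(S) \to H^2(S,\ZZ) \to \Num(S) \to 0
\]
an automorphism trivial on both ends can still act nontrivially in the middle unless one produces an $\iota$-invariant splitting. Your proposal does not address this, and the sentence about a ``$2$-divisibility check on $K_S$'' is aimed at the evenness of the intersection form, which is a separate assertion, not a proof of integral cohomological triviality. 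The paper handles both points simultaneously by constructing, via the hyperelliptic structure of $C_1$ and a Theta-group descent argument, an explicit $\iota$-invariant divisor class $\Phi'_1$ with $2\Phi'_1$ numerically equivalent to the natural class $\Phi_1$; this yields both the invariant splitting (hence $\iota\in\Aut_\ZZ(S)$) and the evenness of the lattice. You will need an argument of this kind to close the proof.
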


The reader may ask for the reasons to consider surfaces isogenous to a product.

There are several, all pointing out to the phenomenon that, when    $|\Aut_\QQ(S)| $ achieves its maximum,
then $S$ is isogenous to a product.

We review in fact in Section \ref{irregular} the status of our two main questions for irregular surfaces, that is, with $q>0$, i.e., $H_1(S, \ZZ)$ infinite.

In  \cite{clz} and \cite{Liu18} it is proved that irregular surfaces with $q(S) \geq 2$ and nontrivial $\Aut_\QQ(S)$ are necessarily
isogenous to a product of unmixed type, with $q(S)=2$; and then $\Aut_\QQ(S) = \ZZ/2$.

 Later \cite{CL18} proved that irregular surfaces with $q(S)=1$ have   $|\Aut_\QQ(S)| \leq 4$ 
 and if equality holds, $S$ is  necessarily
isogenous to a product of unmixed type.

 Finally, in \cite{Cai04} Cai showed that  $|\Aut_\QQ(S)| \leq 4$  if $\chi(\mathcal O_S)> 188$, and recently in \cite{CL24} the case $|\Aut_\QQ(S)| = 4$   has been investigated for regular surfaces. In particular, they showed that  if  $K_S$ is ample and equality is attained then 
$\Aut_\QQ(S) = (\ZZ/2)^2$ and  $S$ is isogenous to a product of curves of unmixed type. Moreover, they  provide an infinite series of examples having $K^2$ arbitrary high.


These are good reasons for us to  investigate in this article  the groups $ \Aut_\ZZ(S),  \Aut_\QQ(S)$ 
for surfaces isogenous to a product.

  \section{Surfaces isogenous to a higher product}
  
  We recall some  definitions and facts essentially introduced in  \cite{isogenous},  and 
we refer to it  for further details.
  
%
%
  
  \begin{defin}
  A smooth surface is said to be {\bf isogenous to a (higher) product} if there is a finite group $G$ and 
  curves $C_1, C_2$ of respective genera $g_1, g_2 \geq 2$ such that $G$ acts freely on $C_1 \times C_2$,
  and such that 
   $ S = (C_1 \times C_2)/G$.
  
  \end{defin}

The word ``higher'' emphasises that the respective genera of $C_1, C_2$ are $\geq 2$, in particular this implies that $S$ is of general type (and with ample canonical divisor).
However, for commodity, from now on we shall drop  the  word ``higher''.

There are two types of surfaces  isogenous to a  product:
the \textit{mixed type} is when there are elements of $G$ swapping the factors; and 
the \textit{unmixed type} is when $G$ acts on each curve and  diagonally on the product.

  
  \begin{remark}\label{rem6}
  (I) If $S$ is isogenous to a product, it is of unmixed type unless possibly if $C_1 \cong C_2 \cong C$.
  
  (II) In the mixed  case there is an exact sequence $ 1 \ra G^0 \ra G \ra \ZZ/2$ such that 
  $$S^0 : = (C \times C)/G^0$$
  is of unmixed type, and there is an element in $ G \setminus G^0$ which is an automorphism of the form 
  $\tau' (x,y) = (y , \tau(x))$, with $\tau \in G^0$.
  
  (III) In all cases we can take a minimal  realization  $ S = (C_1 \times C_2)/G$, this means that 
  no element of $G$ acts trivially on one of the two curves.
  
  (IV) We have an exact sequence of fundamental groups associated to the minimal realization:
  $$ 1 \ra \pi_{g_1} \times \pi_{g_2}  \ra \pi_1(S) \ra G \ra 1,$$
  which is left invariant by every automorphism of $S$.
  
 By the lifting condition (a continuous map lifts to an unramified  covering space  if and only if it leaves invariant the associated subgroup) every automorphism of $S$ lifts to the minimal realization $C_1 \times C_2$.
  
  \end{remark}

  The conclusion of Remark \ref{rem6}-(IV)  is that $\Aut(S)$ lifts to $C_1 \times C_2$, and we see that 
  $$ \Aut(S) = N(G) / G , \qquad N(G) < \Aut (C_1 \times C_2) \text{  being  the normalizer  of } G.$$

From now on we restrict to surfaces isogenous to a product of  \textit{unmixed type}.

  \begin{defin}
  (i) Denote for simplicity $A_1 : = \Aut(C_1), A_1 : = \Aut(C_2)$.
  
  (ii) say moreover that we are in the \textit{strongly unmixed case} if $C_1$ is not isomorphic to $C_2$.
  
  \end{defin}
  
  Then, in the strongly unmixed case
  $\Aut (C_1 \times C_2) = A_1 \times A_2$, whereas if $\s(x,y) = (y,x)$, then 
  $$\Aut (C \times C)  = A^2 \rtimes \langle \s \rangle  \cong  A^2 \rtimes \ZZ/2. $$

 \subsection{Group-theoretical description}
 
The description of surfaces isogenous to a product  is accomplished through the Riemann existence theorem,
which allows to construct 
  Galois coverings between projective curves (we refer to  \cite{Survey} for more details on this part).

\begin{definition}\label{gv}
Let $H$ be a finite group, and let  $g\geq 0 \mbox{ and  } m_1, \ldots, m_r > 1$
be integers.
A \textit{Hurwitz generating vector} for $H$ of type $(g;m_1,\ldots ,m_r)$ is a $(2g+r)$-tuple of
elements of $H$:
$$V:=(a_1,b_1,\ldots, a_{g},b_{g};c_1, \ldots, c_r)$$
such that 
\begin{itemize}
\item the elements of $V$ generate $H$, 
\item $\prod_{i=1}^{g}[a_i,b_i]\cdot c_1\cdot c_2\cdots c_r=1$ and 
\item  $\mathrm{ord}(c_i)=m_i$ for $i=1,\ldots, r$.
\end{itemize}
\end{definition}

By Riemann's existence theorem, any curve $C$  together with an action of
a finite group $H$ on it, such that $C/H$ is a curve $C'$ of genus $g$, is essentially determined by 
a generating vector $V$ for $H$ of signature $(g;m_1,\ldots ,m_r)$, where the mapping class group of the punctured base curve acts,
in particular the orders $m_1, \dots, m_r$ are defined only up to 
a permutation $\sigma \in \mathfrak{S}_r$.

We call the elements $a_1,b_1,\ldots, a_{g},b_{g}$ the \textit{global monodromies} (they correspond to the choice of a basis of $\pi_1(C')$) while $c_1,
\ldots, c_r$ are called the  \textit{local monodromies} and correspond to geometric loops around the branch points of $C\to C/H$. Moreover, they determine  the set of elements of $H$ which fix points on $C$:
$$\Sigma(V):= 
\bigcup_{h\in G}
\bigcup_{i=1}^r
\bigcup_{j=1}^{\mathrm{ord}(c_i)}
 \{ h\cdot c_i^j\cdot h^{-1}\}\,,$$
is called the set of stabilizers for the action of $H$ on $C$.

A surface $S=(C_1\times C_2)/G$  isogenous to a product of unmixed type is then described via a  pair $(V_1,\, V_2)$ of
generating vectors of $G$, which are \textit{disjoint}, in the sense that  $\Sigma(V_1)\cap \Sigma(V_2)=\{ 1\}$.

  \subsection{The unmixed case} To avoid confusion, since $G$ acts diagonally on  $C_1 \times C_2$,
  we view $ G < A_j$, for $j=1,2$,  and $G $ acting on $C_1 \times C_2$ as the diagonal subgroup $ \De_G < A_1 \times A_2$.
  
  Then look at the normalizer $N^0(G): = N(G) \cap (A_1 \times A_2)$ of $ \De_G  < G \times G < A_1 \times A_2$ inside $  A_1 \times A_2$.
  
  We have, setting $ \Ad(\ga) g : = \ga g \ga^{-1}$,  and letting $N_ {A_j}(G) $ be the normalizer of $G$ inside $A_j$,
  \begin{equation}
  N^0(G) = \{ (\ga_1, \ga_2)  \in  N_ {A_1}(G)\times  N_ {A_2}(G) \mid \Ad(\ga_1) = \Ad(\ga_2)\} 
   < N_ {A_1}(G) \times N_ {A_2}(G).
  \end{equation}
  Clearly $\De_G < N^0(G)$ and we have an exact sequence of groups 
  $$ 1 \ra \De_G \ra  N^0(G) \cap (G \times G) \ra Z(G) \ra 1,$$
where $Z(G)$ is the centre of $G$ and the last map $ G \times G \ra G, \ (g_1, g_2) \mapsto g_1^{-1} g_2 $ is a map of sets 
(if $G$ is not abelian),
but a homomorphism if $g_1^{-1} g_2  \in Z(G)$.

In other words, setting $N_j : = N_ {A_j}(G)$, $N^0(G)$ is the inverse image of the diagonal $\De_{\Aut(G)}$ under the exact sequence
$$ 1 \ra \sC_1 \times \sC_2 \ra N_1 \times N_2 \ra H_1 \times H_2 < \Aut(G)^2 ,$$ 
where $ \sC_j $ is the centralizer of $G$ in $A_j$, $H_j : = N_j /\sC_j$. 

Summing up, if we set $H := H_1 \cap H_2$, 
we have the exact sequence 
$$ 1 \ra \sC_1 \times \sC_2 \ra N^0(G)  \ra H \ra 1.$$ 
Dividing by $\De_G$ and defining $\Aut^*(S) : = N^0(G) / \De_G$  we get
\begin{equation}\label{eq_autS}
    1 \ra  (\sC_1 \times \sC_2) / \De_{Z(G)} \ra \Aut^*(S) \ra H / \De_{\Inn(G)} \ra 1,
\end{equation} 
since $(\sC_1 \times \sC_2) \cap \De_G = \De_{Z(G)} \cong Z(G)$ and $G / Z(G) = \Inn(G)$.

 \bigskip
 Observe that in the strongly unmixed case, $\Aut^*(S) = \Aut(S).$ 
 
 \subsection{The non strongly unmixed case}
 In the unmixed case with $C_1 \cong C_2$, we have that $N^0(G) < N(G)$ has index at most 2,
 and equal to $2$ if and only if there is an automorphism $$ \ga (x,y) = (\ga_1(y), \ga_2(x))$$
 normalizing the subgroup $\De_G$, which acts on $C_1 \times C_2$ via $g(x,y) = (g(x), g(y))$
 (but where we do not identify $C_1$ with $C_2$ under a given  isomorphism, hence we do not get the diagonal $\De_G$
 of $G \times G$). 
 
 This means that $\ga_1 : C_2 \ra C_1, \ga_2 : C_1 \ra C_2,$ satisfy that for each $g\in G$
 there is $g'$ such that $\ga_2^{-1} g \ga_2(x) = g'(x)$, $\ga_1^{-1} g \ga_1(y) = g'(y)$.
 
 To make things more concrete, identify now $C_1$ and $C_2$ with the same curve $C$ in view of a chosen isomorphism.
 
 Then we get an automorphism $\psi : A : = \Aut(C) \ra A$ such that $$\De_G = \{ (g , \psi (g))| g \in G\}.$$
 Now  $\ga (x,y) = (\ga_1 (y), \ga_2 (x))$ normalizes $\Delta_G$ if and only if  
 $$\ga_1^{-1} g  \ga_1 = \psi (\ga_2^{-1} \psi(g ) \ga_2).
 $$

 We do not carry out further these calculations, since in the unmixed case if we consider numerically trivial
 automorphisms, which  cannot exchange the two fibrations of $S$, hence they lie in $\Aut^*(S)$.
 
 \subsection{Determining the normalizers}

Recall that $ N_j < A_j : = \Aut(C_j)$ is the normalizer of $G$, hence $G < N_j $ and
 $ N'_j : = N_j /G$  descends to the  subgroup 
of automorphisms of $C'_j = C_j /G$ which lift to $C_j$.

Let $\sB$ be the branch locus of the quotient map $C_j \ra C'_j$, so that the covering is given by
an epimorphism $\pi_1 (C'_j \setminus \sB , y_0) \ra G$.

Then 
\begin{equation}
N'_j : = \{ \phi \in \Aut(C'_j) \mid \phi (\sB) = \sB, \ \exists \Phi \in \Aut(G), \Phi  \circ \mu_0 = \mu_1 \circ \phi_* \},
\end{equation}
Here
$$ \phi_* : \pi_1 (C'_j \setminus \sB , y_0) \ra \pi_1 (C'_j \setminus \sB , \phi (y_0)),$$
and $\mu_0, \mu_1$ are the respective monodromies with base points $y_0$, respectively $y_1 : = \phi (y_0)$.

Since changing the path the isomorphism of the two fundamental groups changes up to inner automorphism, 
$\Phi$ is only well defined up to $ \Inn(G)$.

The set of local monodromies of the covering is the so called Nielsen assignment of the  conjugacy class
 $ Conj (\mu(b))$  (in $G$)
to a point $b \in \sB$. And $\phi$ induces a permutation of $\sB$ preserving the Nielsen assignment.

$\mu_0$ determines the orbifold fundamental group sequence
$$ 1 \ra \pi_1(C_j , x_0) \ra \pi_1^{orb} (C'_j, y_0) \ra G \ra 1,$$
which in turn determines, via $\phi_*, \Phi, $ the action of $N'_j$ on $H_1(C_j, \ZZ)$.

\subsection{Determining the centralizers}
The elements of the centralizer of $G$ inside $A_j$ induce an automorphism of $C_j$ commuting with the action of $G$,
hence descending to
$C'_j$, and leaving fixed  the monodromy. 

In particular, if the local monodromies are 
pairwise distinct, then these elements induce the identity on the branch set.
Hence, if the base curve $C'_j$ has genus $\leq 1$, then these elements induce the identity on $C'_j$,
hence $\sC_j = Z(G)$.

\subsection{The case of cohomologically trivial automorphisms}
To simplify things, assume that we are concentrating on $\Aut_{\ZZ}(S)$. 
Assume that we have an automorphism $\ga \in \Aut_{\ZZ}(S)$. Then it induces an automorphism $\phi$ of
$C'_j$, which is in $N'_j$. $\phi$ is then acting trivially on the cohomology of $C'_j$,
and we are going to show that $\phi$ must be the identity,  with three  possible exceptions, where $N''_j$,
the image of  $\Aut_\ZZ(S)$ in $N'_j$, has order equal to 2.

By Lefschetz' theorem, this is clear if $C'_j$ has genus at least $2$.

 If instead the genus of $C'_j$ is 1, then $\phi$  could be a translation,
but then it should permute the branch points in $\sB$. By   \cite[Principle 3, page 185]{CatLiu21}, there can be only two branch points,
with multiplicity $m=2$, and the translation should be of order $2$.

\begin{ex}[{\bf Exception I}] The group $G$ is generated by elements $a,b,c : = c_1$, which are the monodromy images
$a: = \mu(\al), b: =  \mu(\be), c_1 := \mu (\ga_1)$ $$ \mu : \langle \al, \be, \ga_1, 
\ga_2| \ga_1^2 = \ga_2^2= [\al, \be]\ga_1 \ga_2 =1\rangle \ra G.$$
And there exists an automorphism $\Phi : G \ra G $ such that 
$$\Phi (a)=a, \Phi (b)=b, \Phi (c_1)=c_2, \Phi (c_2) = c_2^{-1} c_1 c_2.$$ 
\end{ex}

If instead the genus of $C'_j$ is 0, then $\phi$ can exchange, again by the cited principle,  at most 
two points of $\sB$, which have multiplicities $2$, while the other have odd multiplicity and are fixed.
Hence we have at most 4 branch points.

\begin{ex}[{\bf Exception II}] The group $G$ is generated by elements $c_1, c_2,c_3$, which are the monodromy images
$c_1: = \mu(\ga_1), c_2 := \mu (\ga_2), c_3 := \mu (\ga_3)$ $$ \mu : \langle \ga_1, 
\ga_2, \ga_3 | \ga_1^2 = \ga_2^2= \ga_1 \ga_2 \ga_3 =\ga_3^{2n+1} = 1\rangle \ra G.$$
And there exists an automorphism $\Phi : G \ra G $ such that 
$$\Phi (c_3)=c_3 , \Phi (c_1)=c_2, \Phi (c_2) = c_2^{-1} c_1 c_2 = c_3 c_2.$$ 
Observe that the Abelianization of $G$ is $\ZZ/2$, with cyclic kernel of order exactly $2n+1$.

This case exists, since the group $ \ZZ/2 * \ZZ/2$ is generated by elements $c'_1, c'_2$ of order two,
and since $c_3= c_2 c_1 $, it suffices to take  $ (\ZZ/2 * \ZZ/2 )/ \sK$, with $\sK$ normally generated by
$ (c'_2 c'_1)^{2n+1}$.
\end{ex}

\begin{ex}[{\bf Exception III}] The group $G$ is generated by elements $c_1,  c_3, c_4 $, which are the monodromy images
$c_1: = \mu(\ga_1), c_3 := \mu (\ga_3), c_4 := \mu (\ga_4)$ $$ \mu : \langle \ga_1, 
\ga_2, \ga_3, \ga_4  | \ga_1^2 = \ga_2^2= \ga_1 \ga_2 \ga_3 \ga_4 =\ga_3^{2n+1} = \ga_4^{2h+1} =1\rangle \ra G.$$
And there exists an automorphism $\Phi : G \ra G $ such that 
$$\Phi (c_3)=c_3 , \Phi (c_4)=c_4 , , \Phi (c_1)=c_2, \Phi (c_2) = c_2^{-1} c_1 c_2 .$$ 

Observe that in the Abelianization $G^{ab}$  of $G$ (multiply by $(2n+1)(2h+1)$) 
$[c_1]= [c_2]$, hence  $[c_3]= -[c_4]$, therefore $G^{ab}$  is a quotient of $\ZZ/2 \oplus \ZZ / (2r+1)$, 
where $2r + 1 = \mathrm{GCD} (2n+1, 2h+1)$ and $G^{ab}$ can be equal to $\ZZ/2 \oplus \ZZ / (2r+1)$.
\end{ex}

\begin{remark}\label{trivial}
We have $q(S) = g'_1 + g'_2$, where $g'_j $ is the genus of $C'_j = C_j /G$.

In the case where $q(S)=0$, then we can apply  \cite[Lemma 2.1]{CFGLS24} and infer that all the branch points are left fixed,
hence  $N''_1, N''_2$ are trivial.

More generally, if for instance $g'_2=0$, then $N''_1$ is trivial since the relative irregularity of the fibration $S\to C'_1$ is zero (and similarly, if $g'_1=0$, then  $N''_2$ is trivial).
\end{remark}

\section{Surfaces isogenous to a product  with $q= p_g=0$}

Surfaces isogenous to a product   with $q= p_g=0$  carry two  fibrations overs curves (see \cite{fl} for the mixed case).  Therefore, in the unmixed case, $\Aut_\QQ(S)$ equals the group
$\Aut^*(S)$ which does not interchange these two fibrations, namely  the natural isotrivial fibrations
$S\to C'_j:=C_j/G$, $j=1,2$.
In fact, $H^j(S, \QQ)=0$ for $j=1,3$, while $H^2(S, \QQ)$ has rank 2 and is generated by the classes 
of the respective fibres of the two fibrations. 

We want to study $\Aut_\QQ(S)$ and $\Aut_\ZZ(S)$
for surfaces isogenous to a product of unmixed type   with $q= p_g=0$.
These surfaces 
have been classified in \cite{BCG08}:

\begin{theorem}
Let $S = (C_1\times C_2)/G$ be a surface isogenous to a product of unmixed type,
with $p_g(S) = 0$, then $G$ is one of the groups in the Table \ref{tab1} and the types are listed in the table.
For each case in the list we have an
irreducible component of the moduli space, whose dimension is denoted by $D$.
\begin{table}[!h]
\begin{tabular}{c|c|c|c|c|c}
$G$ & $Id(G)$ & $T_1$ & $T_2$ & $D$&  $H_1(S,\mathbb Z)$\\
\hline\hline
$\mathfrak{A}_5$ & $\langle 60,5\rangle$& $[2,5,5]$& $[3,3,3,3]$&1&$(\mathbb Z/3)^2\times (\mathbb Z/{15})$\\
$\mathfrak{A}_5$ & $\langle 60,5\rangle$& $[5,5,5]$& $[2,2,2,3]$  &1&$(\mathbb Z/{10})^2$\\
$\mathfrak{A}_5$ & $\langle 60,5\rangle$& $[3,3,5]$& $[2,2,2,2,2]$  &2&$(\mathbb Z/2)^3\times \mathbb Z/6$\\
$\frak S_ 4 \times \mathbb Z/2$& $\langle 48,48 \rangle$& $[2,4,6]$& $[2,2,2,2,2,2]$  &3&
$(\mathbb Z/2)^4\times \mathbb Z/4$\\
G(32) & $\langle 32,27 \rangle$& $[2,2,4,4]$& $[2,2,2,4]$ &2&$(\mathbb Z/2)^2\times \mathbb Z/4\times \mathbb Z/8$\\
$(\mathbb Z/5)^2$ & $\langle 25,2\rangle$& $[5,5,5]$& $[5,5,5]$ &0&$(\mathbb Z/5)^3$\\
$\mathfrak {S}_4$ & $\langle 24,12\rangle$& $[3,4,4]$& $[2,2,2,2,2,2]$ &3&$(\mathbb Z/2)^4\times \mathbb Z/8$\\
G(16) & $\langle 16,3\rangle$& $[2,2,4,4]$& $[2,2,4,4]$&2&$(\mathbb Z/2)^2\times \mathbb Z/4\times \mathbb Z/8$\\
$D_4\times \mathbb Z/2$ & $\langle 16,11\rangle$& $[2,2,2,4]$& $[2,2,2,2,2,2]$ &4&$(\mathbb Z/2)^3\times (\mathbb Z/4)^2$\\
$(\mathbb Z/2)^4 $ & $\langle 16,14\rangle$& $[2,2,2,2,2]$&$[2,2,2,2,2]$  &4&$(\mathbb Z/4)^4$\\
$(\mathbb Z/3)^2$ & $\langle 9,2\rangle$& $[3,3,3,3]$& $[3,3,3,3]$&2&$(\mathbb Z/3)^5$\\
$(\mathbb Z/2)^3$ & $\langle 8,5\rangle$& $[2,2,2,2,2]$& $[2,2,2,2,2,2]$  &5&$(\mathbb Z/2)^4\times (\mathbb Z/4)^2$
\end{tabular}	
\caption{ }\label{tab1}
\end{table}
\end{theorem}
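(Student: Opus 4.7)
The plan is to combine the structural formula (\ref{eq_autS}) for $\Aut^*(S)$ with the classification in Table~\ref{tab1}, and to handle the twelve entries by a uniform case-by-case analysis. First I would reduce $\Aut_\QQ(S)$ to $\Aut^*(S)$: since $q=p_g=0$, the space $H^2(S,\QQ)$ has rank $2$ and is spanned by the classes of the two fibres $F_j$ of the isotrivial fibrations $S\to C'_j$; as $F_1^2=F_2^2=0$ while $F_1\cdot F_2>0$, these classes are linearly independent, so any automorphism permuting them acts nontrivially on $H^2(S,\QQ)$. Hence $\Aut_\QQ(S)$ preserves each fibration, which gives exactly $\Aut_\QQ(S)=\Aut^*(S)$.

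Next, for each entry of Table~\ref{tab1} I would compute the pieces entering (\ref{eq_autS}). Since both $C'_j\cong\mathbb{P}^1$, the quotient $N'_j=N_j/G$ consists of those M\"obius transformations preserving the branch locus $\sB_j$ with its Nielsen assignment up to some $\Phi\in\Aut(G)$, and that lift to $C_j$. The centralizer $\sC_j$ equals $Z(G)$ except when certain local monodromies coincide, in which case it is enlarged by M\"obius maps permuting the repeated branch points while fixing the Nielsen data. Setting $H=H_1\cap H_2\subset\Aut(G)$, the sequence (\ref{eq_autS}) yields
\[
|\Aut^*(S)|=\frac{|\sC_1|\cdot|\sC_2|\cdot|H|}{|Z(G)|\cdot|\Inn(G)|}.
\]
For each Hurwitz type in the table I would enumerate via Magma the disjoint pairs $(V_1,V_2)$ of generating vectors, compute this quantity, and take the maximum over such pairs. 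I expect each of the five non-abelian entries to satisfy $|\Aut^*(S)|\leq 32$; the Beauville case $(\ZZ/5)^2$ to give $75$; $(\ZZ/3)^2$ to give $72$; $(\ZZ/2)^4$ to give $160$; and $(\ZZ/2)^3$ to realize the overall maximum $192$.

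For $\Aut_\ZZ(S)$, I would invoke Remark~\ref{trivial}: with $g'_1=g'_2=0$, the images $N''_j$ of $\Aut_\ZZ(S)$ in $N'_j$ are trivial, so $\Aut_\ZZ(S)\subseteq(\sC_1\times\sC_2)/\De_{Z(G)}$. The fibre classes span $H^2(S,\QQ)$ and are fixed by all of $\Aut^*(S)$, so the only remaining obstruction is the induced action on the torsion group $H_1(S,\ZZ)$ recorded in the last column of Table~\ref{tab1}; this action is read off from the sequence of Remark~\ref{rem6}(IV) by tracing how chosen central elements of $G\times G$ act on the abelianization of $\pi_1(C_1)\times\pi_1(C_2)$. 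For eleven of the twelve entries I expect this to force $\Aut_\ZZ(S)=1$. For $G=D_4\times\ZZ/2$, where $Z(G)=(\ZZ/2)^2$, the quotient $(\sC_1\times\sC_2)/\De_{Z(G)}$ already admits a nontrivial involution coming from a pair $(z_1,z_2)\in Z(G)\times Z(G)$ outside the diagonal, and a direct matrix computation on $(\ZZ/2)^3\times(\ZZ/4)^2$ should show that it acts as the identity.

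Finally, the evenness of the intersection form in the $D_4\times\ZZ/2$ case reduces, by Wu's formula, to showing that $K_S$ is $2$-divisible in $H^2(S,\ZZ)/\mathrm{tors}$. A Riemann--Hurwitz computation for types $[2,2,2,4]$ and $[2,2,2,2,2,2]$ with $|G|=16$ gives $g_1=3$, $g_2=9$, hence $K_S^2=8$ and $K_S\cdot F_1=16$, $K_S\cdot F_2=4$. The sublattice $\langle F_1,F_2\rangle$ has index $16$ inside the rank-$2$ unimodular lattice $H^2(S,\ZZ)/\mathrm{tors}$, and I would locate an explicit primitive divisor class $D$ with $K_S=2D$ using the quotient description $S=(C_1\times C_2)/G$ together with a pushforward argument on invariant divisors on $C_1\times C_2$, thereby identifying $H^2(S,\ZZ)/\mathrm{tors}$ with the hyperbolic plane $U$ rather than $I_{1,1}$. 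The main obstacle will be the case-by-case enumeration of $\sC_j$ and $H$ for the five non-abelian entries, where coincidences of local monodromies enlarge the centralizer and interact delicately with the $\Aut(G)$-action on generating vectors, and, for $D_4\times\ZZ/2$, the explicit identification of both the nontrivial $\Aut_\ZZ$-element and the divisor $D$ witnessing evenness.
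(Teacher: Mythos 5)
Your proposal does not address the statement you were asked to prove. The statement in question is the classification theorem: that for a surface $S=(C_1\times C_2)/G$ isogenous to a product of unmixed type with $p_g(S)=0$, the group $G$ and the pair of signatures $(T_1,T_2)$ must be one of the twelve rows of Table \ref{tab1}, each yielding an irreducible component of the moduli space of the stated dimension $D$, with the stated $H_1(S,\ZZ)$. In the paper this is not proved at all but quoted from Bauer--Catanese--Grunewald \cite{BCG08}. What you have written is instead an outline of the proofs of the subsequent results (Theorems \ref{record} and \ref{AutZ0}) on $\Aut_\QQ(S)$, $\Aut_\ZZ(S)$ and the evenness of the intersection form for $D_4\times\ZZ/2$; those arguments presuppose the table rather than establish it.

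Concretely, nothing in your proposal shows that the list is exhaustive, which is the substance of the theorem. A proof would have to start from $p_g=0$ together with $\chi(\Oh_S)=(g_1-1)(g_2-1)/|G|\geq 1$ to force $q=0$, $\chi=1$, hence $|G|=(g_1-1)(g_2-1)$ and $C_j/G\cong\PP^1$ for both $j$; then use Riemann--Hurwitz to bound the admissible signatures $T_j$ and the order of $G$; then enumerate (in practice by a computer search) all groups of the admissible orders admitting a pair of disjoint Hurwitz generating vectors of the admissible types; and finally compute $D=(r_1-3)+(r_2-3)$ and $H_1(S,\ZZ)$ as the abelianization of $\pi_1(S)$, the latter sitting in the extension of Remark \ref{rem6}(IV). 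None of these steps appears in your text, so as a proof of the stated theorem the proposal has no content; it is an (essentially reasonable) sketch of a different result in the paper.
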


\begin{remark}\label{monodromies0}
We read the Hurwitz generating vectors associated to the surfaces in Table \ref{tab1} from   \cite[Section 3]{BaCa04} and \cite[Section 6]{BCG08}.

\begin{enumerate}[label=\roman*),leftmargin=\parindent]
 \setlength\itemsep{8pt}

\item $G=\frak A_5$:

\begin{itemize}[leftmargin=*]
\item $V_1=[(24)(35), (13452),(12345)]$, $V_2=[(123),(345),(243),(152)]$;
\item $V_1=[(12534),(12453),(12345)]$, 
$V_2=[(12)(34),(24)(35),(14)(35),(243)]$;
\item $V_1=[(123),(345),(15432)]$, $V_2=[(12)(34),(13)(24),(14)(23),(14)(25),(14)(25)]$.
\end{itemize}

\item $G=\frak S_ 4 \times \mathbb Z/2$:

 $V_1=[((12),0), ((1234), 1), ((243),1) ]$,
 
$V_2=[((12)(34),1),((12),1),((34),1),((14)(23),1),((23),1),((14),1)]$.

\item $G=G(32)=\langle x_1,x_2,x_3,x_4,x_5\mid
x_j^2,
[x_1,x_2]x_4,[x_1,x_3]x_5,[x_j,x_k]
 \text{ for } (j,k)\neq(1,2),(1,3)
 \rangle $:
 
 $V_1=[ x_2  x_3  x_4,   x_2,   x_1  x_2  x_3 x_5, x_1 x_2 ]$,

 $V_2=  [  x_1    x_4  x_5, x_2  x_3 x_4 x_5,  x_2  x_4  x_5, x_1  x_3  x_4] $.

\item $G=(\mathbb Z/5)^2$: $V_1=[e_1,e_2, -(e_1+e_2) ]$, $V_2=[ e_1-e_2,e_1+2e_2,-2e_1-e_2]$.

\item $G=\mathfrak {S}_4$:
$V_1=[(123),(1234),(1243) ]$, $V_2=[(12),(12),(23),(23),(34),(34)]$.

\item $G=G(16)=(\ZZ/4\times \ZZ/2)\rtimes \ZZ/2=
\langle x,y,z\mid x^4,y^2,z^2, [x,y],[y,z], [z,x]y
\rangle$:

 $V_1=[z,z,x,x^{-1}]$, $V_2=[zx^2y,zx^2y,xyz,(xyz)^{-1}]$.
 
\item $G=D_4\times \mathbb Z/2= \langle r,s \mid r^4,s^2, (sr)^2\rangle \times \ZZ/2$:

$V_1=[(1,1),(s,1),(rs,0),(r,0) ]$, $V_2=[(s,0),(sr,1),(sr^2,0),(sr,1),(r^2,1),(r^2,1) ]$.

\item $G=(\mathbb Z/2)^4 $: $V_1=[e_1, e_2, e_3, e_4, e:=e_1+e_2+e_3+e_4 ]$, $V_2= [ e + e_1, e + e_2, e_1 + e_3, e_2 + e_4 , e_3 + e_4]$.

\item $G=(\mathbb Z/3)^2$: $V_1=[e_1, e_2, -e_1, - e_2]$, $V_2=[
e_1+e_2 , e_1 - e_2, -e_1 - e_2 , -e_1 + e_2]$.

\item $G=(\mathbb Z/2)^3$: $V_1=[e_1+ e_2, e_1 + e_3, e_2+  e_3, e:= e_1 +e_2 + e_3, e]$, $V_2= [e_1, e_2, e_3, e_1, e_2, e_3]$.

\end{enumerate}

\end{remark}

\begin{theorem}\label{record}
Let $S$ be a surface isogenous to a product of unmixed type, with $q(S)= p_g(S)=0$.

Then  $|\Aut_\QQ(S)| \leq  192$,
and equality is attained for $G = (\ZZ/2)^3$.

If $G$ is abelian,  $|\Aut_\QQ(S)| $ reaches    $160$ for $G = (\ZZ/2)^4$,  
$72$ for $G = (\ZZ/3)^2$,  while  $|\Aut_\QQ(S)| =  75$ for the classical Beauville surface (the sixth case in the table).

Moreover,  if  $G$ is not abelian, then $|\Aut_\QQ(S)| \leq  32$. 
\end{theorem}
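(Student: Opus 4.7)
The strategy has two parts: a cohomological reduction to $\Aut^*(S)$, and a case-by-case computation via the exact sequence \eqref{eq_autS}.

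For the reduction: since $p_g(S) = q(S) = 0$, Noether's formula with $K_S^2 = 8\chi(\Oh_S) = 8$ gives $b_2(S) = 2$, and $H^2(S,\QQ)$ is spanned by the classes of the fibres of the two isotrivial fibrations $S \to C_j/G \cong \PP^1$. An automorphism fixes both fibre classes if and only if it preserves each fibration individually, i.e., lies in $\Aut^*(S)$; hence $\Aut_\QQ(S) = \Aut^*(S)$, and \eqref{eq_autS} together with the description of $N^0(G)$ gives
\[
|\Aut_\QQ(S)| \;=\; \frac{|\sC_1|\,|\sC_2|\,|H_1 \cap H_2|}{|G|}.
\]
For each $G$ in Table~\ref{tab1} the three factors are then computed using the Hurwitz vectors of Remark~\ref{monodromies0}: the group $H_j = N_j/\sC_j \subset \Aut(G)$ is the stabilizer of the Nielsen class $[V_j]$, i.e., the $\Phi \in \Aut(G)$ with $\Phi(V_j)$ in the mapping-class-group orbit of $V_j$; by \S2.5, $\sC_j = Z(G)$ whenever the local monodromies of $V_j$ are pairwise distinct, and otherwise $\sC_j/Z(G)$ is bounded by the order of the largest finite subgroup of $\mathrm{PGL}_2(\CC)$ that permutes the branch points of $C_j \to \PP^1$ preserving Nielsen type, realised on suitably symmetric configurations of branch points in the moduli component.

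For the six non-abelian rows of Table~\ref{tab1} both $Z(G)$ and $\Out(G)$ are of modest order and the multi-sets of entries in $V_1, V_2$ have limited outer symmetry; direct inspection yields $|\Aut_\QQ(S)| \leq 32$. For the six abelian cases $G = (\ZZ/p)^n$, the problem reduces to computing $\GL_n(\FF_p)$-stabilizers of multi-sets of vectors in $\FF_p^n$, combined with $\mathrm{PGL}_2$-stabilizers of point sets on $\PP^1$. For $(\ZZ/5)^2$ (Beauville) all monodromies are pairwise distinct and the $\GL_2(\FF_5)$-stabilizer of $V_j$ is cyclic of order $3$, giving $|\Aut_\QQ(S)| = 25 \cdot 3 = 75$. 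For $(\ZZ/3)^2$ both multi-sets are stabilized by the ``signed permutation'' $D_4 \subset \GL_2(\FF_3)$ of order $8$, giving $9 \cdot 8 = 72$. For $(\ZZ/2)^3$ and $(\ZZ/2)^4$ one uses additionally the $\mathrm{PGL}_2$-enhancement of $\sC_j$ from specially chosen branch configurations.

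For the record case $G = (\ZZ/2)^3$: specialising the 6 branch points of $V_2 = [e_1, e_2, e_3, e_1, e_2, e_3]$ to the vertices of an octahedron on $\PP^1$, the rotation group $\mathfrak{S}_4 \subset \mathrm{PGL}_2(\CC)$ preserves the three antipodal pairs (with stabilizers $\langle e_1\rangle, \langle e_2\rangle, \langle e_3\rangle$) via its surjection onto $\mathfrak{S}_3$ with Klein four kernel; this yields $|N_2| = 8 \cdot 24 = 192$, $|\sC_2| = 32$, and $H_2 \cong \mathfrak{S}_3 \subset \GL_3(\FF_2)$. Specialising the 5 branch points of $V_1 = [e_1+e_2, e_1+e_3, e_2+e_3, e, e]$ to a $D_3$-symmetric configuration (equilateral triangle plus two poles of the rotation axis) gives $|\sC_1| = |Z(G)| = 8$ and $H_1 \cong \mathfrak{S}_3$; the two $\mathfrak{S}_3$'s coincide inside $\GL_3(\FF_2)$, so $|\Aut_\QQ(S)| = (8 \cdot 32 \cdot 6)/8 = 192$. \textbf{The main obstacle} is the case volume: for each of the twelve rows of Table~\ref{tab1} one must identify the $\Aut(G)$-stabilizer of the Nielsen class, determine the maximal compatible $\mathrm{PGL}_2$-symmetry of each base $\PP^1$ realisable within the moduli component, and verify that a single surface $S$ realises both simultaneously; the more intricate stabilizer and compatibility computations in $\GL_n(\FF_p)$ are conveniently verified with Magma.
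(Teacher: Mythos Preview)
Your plan follows essentially the same route as the paper: identify $\Aut_\QQ(S)$ with $\Aut^*(S)$, use the exact sequence \eqref{eq_autS} to reduce to computing $\sC_j$ and $H_j$, bound these via the finite-subgroup classification in $\mathrm{PGL}_2(\CC)$, and run through Table~\ref{tab1}. Your octahedral specialisation for the $(\ZZ/2)^3$ six-tuple is in fact the cleanest way to realise $N'_j=\mathfrak S_4$; the paper first exhibits an order-$12$ configuration and only afterwards argues that the $\mathfrak S_4$ orbit of six vertices is achievable, so your presentation of the record case is slightly tidier.

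Two factual slips to correct. First, the row count: Table~\ref{tab1} has eight non-abelian rows (three for $\mathfrak A_5$, one each for $\mathfrak S_4\times\ZZ/2$, $G(32)$, $\mathfrak S_4$, $G(16)$, $D_4\times\ZZ/2$) and four abelian rows, not six of each. Second, and more substantively, for the Beauville surface the $\GL_2(\FF_5)$-stabilizer of each triple $V_j$ is $\mathfrak S_3$ of order $6$, not cyclic of order $3$: any permutation of three vectors summing to zero is realised by a linear map. What has order $3$ is the intersection $H_1\cap H_2$, as the paper verifies by checking that the three transpositions in $H_1$ send $e'_1=(1,-1)$ outside the set $\{e'_1,e'_2,-e'\}$. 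Your formula $25\cdot 3=75$ is right, but the reason is the intersection, not the individual stabilizers. You will also need, for the global bound $\leq 192$, the upper-bound side of the $(\ZZ/2)^3$ argument: since the $5$-tuple always has three pairwise distinct monodromies, $\sC_1=G$ regardless of branch points, whence $|\Aut^*(S)|\leq |\sC_1|\cdot|N'_2|\leq 8\cdot 24$.
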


\begin{proof}
We have already observed that $\Aut_\QQ(S)$ is the subgroup $\Aut^*(S)$ of $\Aut(S)$ not exchanging the two fibrations
$ S \ra C'_j= C_j /G$.

We have also shown that we have an exact sequence 
$$ 1 \ra( \sC_1 \times\sC_2)/ Z(G) \ra \Aut^*(S)  \ra H/\Inn(G) \ra 1.$$

If, for each $C_j \ra C'_j$ there are three points with  local monodromies different from all the other local monodromies,
then our previous argument shows that $\sC_j = Z(G)$, hence  $$( \sC_1 \times\sC_2)/ Z(G) \cong Z(G).$$

\

{\bf The case of the classical Beauville surface.}
Here $G = (\ZZ/5)^2$, and the local monodromies are\footnote{these formulae are valid for 
$G = (\ZZ/n)^2$, for all $n$ relatively prime to 6.}
 the two triples $[e_1, e_2 , -e] : = [(1,0), (0,1), (-1,-1)]$,
respectively 
{
$$\Psi [e_1, e_2 , -e] : = [(1,-1), (1,2), (-2,-1)] = : [e'_1, e'_2, -e'].$$
}
In both cases we have three elements which generate  $G$ with sum equal to zero.

Hence $N'_j \cong \frak S_3$, and the map $N'_j \ra H_j < \Aut(G)$ is obtained if we consider  the induced linear map 
$\Phi$ associated to a permutation of the
three vectors.

{ The five nontrivial maps in $H_1$ are the maps sending $ (a,b)\in G$ respectively to $$ (b,a), \  (-a,-a+b), \ (a-b, -b) , \ (-b, a-b) , \ (b-a, -a).$$

The last two maps correspond to the 3-cycles of $\frak S_3$ and cyclically permute  the vectors  $e'_1, e'_2, -e'$.
On the other hand, looking at the image of $e_1'=(1,-1)$ under  the first  three maps, we obtain the vectors $(-1,1), (-1,-2),(2,1)$, none of which is in the set  $\{e'_1, e'_2, -e'\}$.

Therefore, $H$ has order 3  and $|\Aut^*(S)|= 75$.
}

\

{\bf The case with $G= (\ZZ/2)^4$.}
The  local monodromies are
$ [e_1, e_2, e_3, e_4, e:=e_1+e_2+e_3+e_4]$ and 
$$ \Psi [e_1, e_2, e_3, e_4, e] = : [e'_1, e'_2, e'_3, e'_4, e'] : = [ e + e_1, e + e_2, e_1 + e_3, e_2 + e_4 , e_3 + e_4].$$

We see that the permutation $\s_1$ exchanging $e_1$ with $e_2$ and $e_3$ with $e_4$ yields the
permutation exchanging $e'_1$ with $e'_2$ and $e'_3$ with $e'_4$.

In fact, the first quintuple consists of 4 vectors of weight $1$ and one of weight  $4$, the second 
quintuple consists of 3 vectors of weight $2$ and 2  of weight  $3$. Hence, every non-trivial permutation of $\frak S_4$
permuting the elements $e_1, e_2, e_3, e_4$ leaves the weight invariant, therefore, if it leaves invariant the set  
$ \sE' : = \{ e + e_1, e + e_2, e_1 + e_3, e_2 + e_4 , e_3 + e_4\}$ it must swap $e'_1$ with $e'_2$, hence $e_1$ with $e_2$,
and then $e_3$ with $e_4$.

If instead we have a permutation in  $\frak S_5 \setminus \frak S_4$, then $e$ is exchanged with some $e_j$.

If we exchange $ e$ with $e_1$, then  we must exchange  $e_2 $ with $e_3$, this leaves invariant the set $\sE'$.

If we exchange  $ e$ with $e_3$, then we must permute $\{e_1, e_2\}$ with $\{e_1, e_4\}$,
and the only possibility of having  $\sE'$ invariant is that $e_2$ is permuted with $e_4$, $e_1$ is fixed.

Now, using the two above permutations of order two, and their products with $\s_1$, we see that the orbit
of $e$ equals the whole set $\sE : = \{e_1, e_2, e_3, e_4, e\}$.

Hence the group of permutations of the set $\sE : = \{e_1, e_2, e_3, e_4, e\}$ inducing a linear map
permuting the set $\sE'$  has order 10, hence $H \cong D_{5}$,  and 
 $\Aut^*(S)$ has order equal to  $16 \cdot 10 = 160$.

 \
 
{\bf The case with $G= (\ZZ/3)^2$.}

Here, the four-tuples of monodromies are $ [e_1, e_2, -e_1, - e_2]$, $ [e_1+e_2 , e_1 - e_2, -e_1 - e_2 , -e_1 + e_2]$.

Now, the admissible permutations of the first 4-tuple which induce an automorphism of $G$ are 
leaving the partition  $\{e_1, - e_1\} \cup \{e_2, - e_2\}$ invariant. 

These permutations permute clearly the complementary set $\sE^c$ of $\sE=\{e_1, e_2, -e_1, - e_2\}$
(inside $G \setminus \{0\}$), hence here  $\Aut^*(S)$ has order equal to  $9 \cdot 8 = 72$.

\

The next case that we consider is the record winning case.

\

{\bf The case with $G= (\ZZ/2)^3$.}

Here, the 6-tuple of monodromies is  $ [e_1, e_2, e_3, e_1, e_2, e_3]$, 
while the 5-tuple of monodromies is  $ [e_1+ e_2, e_1 + e_3, e_2+  e_3, e:= e_1 +e_2 + e_3, e]$.

We assume without loss of generality that the first 3 branch points are $\infty, 0, 1$.

We claim that, for a proper choice of the branch points,  the group $N'_1$ surjects onto  the group $\frak S_3$ of automorphisms of $G$, permuting the 
three vectors $e_1, e_2, e_3$.

To achieve this, we need to show that, for each permutation of $\{1,2,3\}$, there is a projectivity permuting accordingly
the pairs $\{\infty, P_4\}$, $\{0 , P_5\}$, $\{1, P_6\}$. 

A way to show this is to observe that  $\sB_1: = \{\infty, 0, 1\}$ is an orbit for the natural subgroup, isomorphic to $\frak S_3$,
preserving $\sB_1$. Then $\sB_2 : = \{P_4, P_5, P_6\}$ must be another orbit, in particular, given the tranposition
$ z \mapsto 1/z$ exchanging $\infty$ and $0$, we may take $P_6$ as the other fixpoint $-1$. Arguing like this, we find
that $P_4, P_5, P_6$ must be  $1/2, 2, -1$.

The kernel of this surjection consists of projectivities $\tau$ fixing each of the sets $\{\infty, P_4\}$, $\{0 , P_5\}$, $\{1, P_6\}$.

Clearly since a projectivity with four fixpoints is the identity,  the kernel is a subgroup of $(\ZZ/2)^3$ such that $\tau$ different from the identity
has  at most one coordinate
different from $1$. Hence the subgroup is either  $(\ZZ/2)(1,1,1)$ or is contained in $(1,1,1)^{\perp}$.

In both cases, if there is such a nontrivial $\tau$, there is a transformation permuting the two elements of a couple,
without loss of generality $1, -1$. The only projectivities $\tau$ with such a property are of the form $ z \mapsto \frac{a z + b }{-b z -a}$, which have order 2. If $a=0$, then we obtain that $0, \infty$ are exchanged, which contradicts that $\tau$ is in the kernel.
Hence we may assume that $a=1$, and then $ \tau (0) =0 \Leftrightarrow b=0$ and so $ \tau(2) = -2$, hence this case is excluded and we must have that $\tau$ transposes $0$ with $2$, that is, $a=1, b= -2$; we see now that $\tau (z) =  \frac{ z -2 }{2 z -1}$
 transposes $0$ with $2$, $\infty$ with $1/2$, and  $1$ with $-1$.
 
 In this way we have found a group $N'_1$ of cardinality $6\cdot 2=12$. 
 
 For each other choice of branch points $P_1, \dots, P_6$ we have a homorphism  $N'_1 \ra \frak S_3$ 
 whose kernel has either 4 or 2 elements. 
 
 The cardinality 12 is maximal unless 
 $N'_1$  surjects onto $\frak S_3$ and the kernel has order $4$.
 
 In this case $N'_1$  would be a non abelian subgroup of $\PP GL(2, \CC)$ of order $24$, hence
 it would be isomorphic to $\frak S_4$. There is indeed a surjection of $\frak S_4$ onto $\frak S_3$
 with kernel the Klein group $\sK$ of double transpositions.
 
 There is according to Klein only one action of  $\frak S_4$ on $\PP^1$, corresponding to a 
 Galois covering of $\PP^1$ branched in three points, with local monodromies of respective orders $2,3,4$.
 
 Then all the orbits have $24$ elements, except the special orbits, with respectively $12,8, 6$ elements.
 
 Hence $\sB$, which is $N'_1$-invariant, is a union of orbits and  should be the orbit with 6 elements, 
 each having as  stabilizer a cyclical permutation of order $4$.
 
The partition of the orbit as the union of three pairs  corresponds to the choice to the three Sylow subgroups of
order $8$, which are dihedral groups $D_4$, inverse images of the three transpositions of $\frak S_3$.
To each pair is then associated an element of the set $\{e_1, e_2, e_3\}$ and in this way the monodromy
homomorphism to $G$ is determined.

Concerning the second monodromy, we see that we have a surjection of $N'_2$ onto $\frak S_3$ which permutes the
three different monodromies,
 and leaves $e$ fixed. And the kernel must be trivial.

In both  cases the previous criterion does not apply, because there are repeated monodromies, but we see that $N'_2$ consists 
of 
$6$ elements, and for each one of them there is a permutation in $N'_1$, consisting of $ 12$ or $24$ elements,  corresponding to the given action on $G$; hence 
$\Aut^*(S)$ has order equal to  $ 8 \cdot 12 $ or $ 8 \cdot 24$, that is, $96$ or $192$ elements.

\

{\bf The non-abelian cases.}

 We shall give more crude estimates for the case where the group $G$ is not abelian.

Recalling that $H = H_1 \cap H_2 \supset \Inn(G)$, 
we have $\frac{ H}{\Inn (G)}  \subseteq \frac{  H_j }{\Inn(G)}$
and   the exact sequence
\[1\ra \sC_j/Z(G) \ra N_j'\ra H_j/\Inn(G)\to 1,\]
which together  with \eqref{eq_autS}  provide the  rough estimation:

\[|\Aut^*(S)|\leq   \min\{|N_1'|\cdot |\sC_2| , |N_2'|\cdot |\sC_1| \}.\]

The group $N'_j$, being a finite subgroup of $\PP GL(2, \CC)$ is either:
\begin{enumerate}
\item
cyclic $\ZZ/n$, with two orbits of cardinality 1, the others of cardinality $n$;
\item
dihedral $D_n$, with three special orbits of cardinalities $(n,n,2)$ and the  others of cardinality $2n$;
\item
$\frak A_4$, with three special orbits of cardinalities $(6,4,4)$ and the  others of cardinality $12$;
\item 
$\frak S_4$, with three special orbits of cardinalities $(12, 8,6)$ and the  others of cardinality $24$;
\item
$\frak A_5$, with three special orbits of cardinalities $(30,20,12)$ and the  others of cardinality $60$.
\end{enumerate}

Since $\sB_j$ is a union of orbits for $N_j'$, we observe that 
if there is an orbit of length 1, then $N'_j$ must be cyclic.

Looking at Table \ref{tab1}, we see that the case  $\frak A_5$ never happens, and if $N_j'=\frak S_4  $, then $\sB_j$ is a single orbit with $6$ elements, and
the six monodromies have the same order.

Similarly, if $N_j'=\frak A_4$, 
then $\sB_j$ is a single orbit with either $4$ or $6$ elements, and in both cases all branching order are equal.

In the case where $N'_j$ is cyclic, then its order is bounded by  $|\sB_j|$ if all the branching orders are equal,
otherwise by $|\sB_j|-1$.

Similarly, if $N'_j$  is dihedral, its order is bounded by  $2| \sB_j|$ if all the branching orders are equal, 
otherwise by $2|\sB_j|-4$.

Using these considerations we obtain the following estimations:
\begin{itemize}
\item if $G=\frak A_5$, $T_1=[2,5,5]$ and $T_2=[3,3,3,3]$, then 
$|N_1'|\leq 2$ and $|N_2'|\leq 12$.

\item If $G=\frak A_5$, $T_1=[5,5,5]$ and $T_2=[2,2,2,3]$, then 
$|N_1'|\leq 6$ and $|N_2'|\leq 3$.

\item If $G=\frak A_5$, $T_1=[3,3,5]$ and $T_2=[2,2,2,2,2]$, then 
$|N_1'|\leq 2$ and $|N_2'|\leq 10$.


\item If $\mathfrak S_ 4 \times \mathbb Z/2$, $T_1=[2,4,6]$ and $T_2=[2,2,2,2,2,2]$, then 
$|N_1'|=1$ and $|N_2'|\leq 24 $.

\item If $G=G(32)$, $T_1=[2,2,4,4]$ and $T_2=[2,2,2,4]$, then 
$|N_1'|\leq 4$ and $|N_2'|\leq 3$.

\item If $G=\frak S_4$, $T_1=[3,4,4]$ and $T_2=[2,2,2,2,2,2]$, then 
$|N_1'|\leq 2$ and $|N_2'|\leq 24$.

\item If $G=G(16)$, $T_1=[2,2,4,4]$ and $T_2=[2,2,4,4]$, then 
$|N_1'|\leq 4$ and $|N_2'|\leq 4$.

\item If $G=D_4\times \mathbb Z/2$, $T_1=[2,2,2,4]$ and $T_2=[2,2,2,2,2,2]$, then 
$|N_1'|\leq 3$ and $|N_2'|\leq 24$.
\end{itemize}

We look now at the cardinalities 
$|\sC_1| , |\sC_2 |$, and  recall that $Z(G)$ is trivial for $\frak A_5$ and $\frak S_4$, has order $2$ for $\frak S_4 \times \ZZ/2$,
and equals $(\ZZ/2)^2$ for $G(32), G(16)$ and $D_4  \times \ZZ/2$.

As remarked above, if
 for $C_j \ra C'_j$ there are three points with  local monodromies different from all the other local monodromies,
then  $\sC_j = Z(G)$.

By inspecting the  local monodromies  reported in  Remark \ref{monodromies0}, we see that 
$|\sC_j|  = |Z(G)|$ ($j=1,2$), except possibly if 
$G=\frak S_4$ or $G=G(16)$ or $G=D_4\times \ZZ/2$.

Thus, if $G$ is not one of these 3 groups, we achieve the bound $|\Aut^*(S)|\leq 12$.

For $G=\frak S_4$, it holds  $\sC_1=Z(G)$, and $|\Aut^*(S)| \leq |N_2'|\leq 24$.

Let us consider $G=G(16)$, whose centre is generated by  $x^2,y$ (see Remark \ref{monodromies0}).
The local monodromies are $[z,z,x,x^{-1}]$ and $[zx^2y,zx^2y,xyz,(xyz)^{-1}]$, so that  $|\sC_j/Z(G)|\leq 2$, whence 
$|\Aut^*(S)|\leq 32$.

Finally, let us consider $G=D_4\times \ZZ/2$, whose centre has order 4 and the local monodromies are:
$$V_1=[(1,1),(s,1),(rs,0),(r,0) ], \ V_2=[(s,0),(sr,1),(sr^2,0),(sr,1),(r^2,1),(r^2,1) ].$$
 
so that  $\sC_1=Z(G)$  and   $|\sC_2/Z(G)|\leq 2$,
 whence $|\Aut^*(S)|\leq 24$.

 \end{proof}

 We consider now $\Aut_\ZZ(S)$.

By Remark \ref{trivial}, it follows that $\Aut_{\ZZ}(S) < Z(G) = N^0(G) \cap (G \times G) / \De_G$, 
because  
the image of  $\Aut_\ZZ(S)$ in $N'_j=N_j/G$ is trivial  for $j=1,2$.
This means, that $\Aut_{\ZZ}(S)$ is a group of automorphisms induced by a subgroup $H < Z(G) \times\{1_G\} < G \times G$.

\begin{theorem}\label{AutZ0}
Let $S$ be a surface isogenous to a product of unmixed type, with $q(S)= p_g(S)=0$.

Then  $\Aut_\ZZ(S)$ is trivial, except for the case of $G=D_4\times \ZZ/2$, where  $\Aut_\ZZ(S) \cong \ZZ/2$
and the intersection form is even.
\end{theorem}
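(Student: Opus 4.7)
By the setup preceding the theorem and Remark \ref{trivial}, for $q(S)=0$ the image of $\Aut_\ZZ(S)$ in each $N'_j$ is trivial, hence every $\sigma\in\Aut_\ZZ(S)$ lifts to $C_1\times C_2$ as $(z,1_G)$ modulo $\Delta_G$ with $z\in Z(G)$. This embeds $\Aut_\ZZ(S)$ into $Z(G)$, and reduces the theorem to deciding, for each $G$ in Table \ref{tab1} and each $z\in Z(G)\setminus\{1\}$, whether the induced automorphism $\hat z\in\Aut(S)$ acts trivially on $H^*(S,\ZZ)$.

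Since $q=p_g=0$, one has $H^1(S,\ZZ)=H^3(S,\ZZ)=0$, $H^2(S,\ZZ)$ has rank $2$, and universal coefficients yield $H^2(S,\ZZ)_{\mathrm{tors}}\cong H_1(S,\ZZ)$ (recorded in Table \ref{tab1}). The free part is spanned rationally by the fibre classes $F_1,F_2$, both fixed by $\hat z$ because $\hat z$ preserves the two fibrations $S\to C'_j$; since a finite-order endomorphism that is trivial on a finite-index sublattice of a torsion-free abelian group is itself trivial, $\hat z\in\Aut_\ZZ(S)$ if and only if $\hat z$ acts trivially on $H_1(S,\ZZ)$.

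To carry out the case analysis, I would compute the action of each $z\in Z(G)$ on $H_1(S,\ZZ)=\pi_1(S)^{\mathrm{ab}}$ using the exact sequence $1\to\pi_{g_1}\times\pi_{g_2}\to\pi_1(S)\to G\to 1$ together with the explicit Hurwitz generating vectors in Remark \ref{monodromies0}. The polycyclic presentation of $\pi_1(S)$ coming from the orbifold fundamental groups of $C'_1, C'_2$ makes this a routine computer algebra computation (e.g.\ in Magma). The anticipated outcome is that for every $G$ in Table \ref{tab1} except $D_4\times \ZZ/2$ no nontrivial central element acts trivially on $H_1(S,\ZZ)$, while for $G=D_4\times \ZZ/2$ (whose centre is $(\ZZ/2)^2$) exactly one of the three nontrivial central elements acts trivially, producing $\Aut_\ZZ(S)\cong\ZZ/2$.

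For the evenness statement, the rank-2 unimodular lattice $L:=H^2(S,\ZZ)/\mathrm{tors}$ has signature $(1,1)$ and is thus isomorphic to either the hyperbolic plane $U$ (even) or $\langle 1\rangle\oplus\langle -1\rangle$ (odd). Riemann--Hurwitz on the types $[2,2,2,4]$ and $[2,2,2,2,2,2]$ gives $g_1=3,\ g_2=9$; adjunction on fibres combined with $F_1\cdot F_2=|G|=16$ then forces $K_S=\tfrac14 F_1+F_2$ rationally, so in particular $F_1/4\in L$. To pin down $L$ itself, I would use the étale covering $\pi\colon C_1\times C_2\to S$: lifting $G$-invariant primitive classes from $H^2(C_1\times C_2,\ZZ)$ (which carries an even form) together with the existence of the nontrivial cohomologically trivial involution $\hat z$ constrains the possible isotropic subgroups of the discriminant form on $\langle F_1,F_2\rangle^*/\langle F_1,F_2\rangle\cong(\ZZ/16)^2$, allowing one to identify $L\cong U$. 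This last step --- distinguishing even from odd, since the purely numerical invariants $F_i^2, F_1\cdot F_2, K_S^2, K_S\cdot F_j$ admit both possibilities as index-$16$ overlattices of $\langle F_1,F_2\rangle$ --- is where I expect the main obstacle to lie, substantially more delicate than the routine (if lengthy) $H_1$-computation.
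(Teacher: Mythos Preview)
Your reduction to $Z(G)$ and the plan to compute the action on $H_1(S,\ZZ)$ from the orbifold presentation are exactly what the paper does, and this correctly eliminates every group except $D_4\times\ZZ/2$: a nontrivial action on $H_1\cong\Tors(H^2)$ already witnesses a nontrivial action on $H^2$. The gap is in the converse, for the surviving case $G=D_4\times\ZZ/2$. Your claimed equivalence ``$\hat z\in\Aut_\ZZ(S)$ if and only if $\hat z$ acts trivially on $H_1(S,\ZZ)$'' is not justified. Acting trivially on $\Tors(H^2)$ and on the free quotient $\Num(S)$ does \emph{not} force triviality on $H^2(S,\ZZ)$: the obstruction is a homomorphism $\Num(S)\cong\ZZ^2\to\Tors(H^2)$, and $\Hom(\ZZ^2,(\ZZ/2)^3\times(\ZZ/4)^2)$ is far from zero. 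Your finite-index-sublattice remark only yields triviality on $\Num(S)$; that the integral classes $F_1,F_2$ are $\hat z$-fixed only pins down the subgroup $\langle F_1,F_2\rangle+\Tors$, of index $16$ in $H^2$, and for abelian groups with torsion that is not enough (e.g.\ $(n,b)\mapsto(n,b+n)$ on $\ZZ\oplus\ZZ/2$ is trivial on the index-$2$ subgroup $2\ZZ\oplus\ZZ/2$ but not on the whole group).

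The paper closes precisely this gap by exhibiting an explicit $(r^2,0)$-invariant splitting $\Num(S)\to H^2(S,\ZZ)$: it constructs $Z(G)$-invariant divisor classes $\Phi'_1,\Phi_2$ on $S$ that form a $\ZZ$-basis of $\Num(S)$ with $(\Phi'_1)^2=\Phi_2^2=0$ and $\Phi'_1\cdot\Phi_2=1$. The geometric input is that $C_1$ is hyperelliptic of genus $3$ (seen via the intermediate quotients $C_1/\langle r^2\rangle$ elliptic and $C_1/(\ZZ/2)\cong\PP^1$), so the hyperelliptic divisor class on $C_1$ is automatically $\Aut(C_1)$-invariant; a theta-group descent then pushes the resulting line bundle from an intermediate double cover down to $S$. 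This single construction simultaneously proves that $(r^2,0)$ acts trivially on $H^2(S,\ZZ)$ and that $\Num(S)\cong U$ is even. So the evenness is not a separate, harder problem as you anticipate, but a byproduct of exactly the invariant-splitting step that your argument is missing.
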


\begin{proof}
The statement is clear for the cases with $G=\frak A_5$ and $G=\frak S_4$.

In the remaining case, we use a MAGMA \cite{MAGMA} script  to determine which  automorphisms in $Z(G)$ act trivially on $H_1(S,\ZZ)$ (see Appendix \ref{script}).

The computations below show that  there are no non-trivial elements  acting trivially on $H_1(S,\ZZ)$ in all cases except for $G=D_4\times \ZZ/2$.

For $G=D_4\times \ZZ/2$ the element $(r^2,0)$ acts trivially on  $H_1(S,\ZZ) \cong (\ZZ/2)^3\times (\ZZ/4)^2$.

There remains to show that  $(r^2,0)$ acts trivially on $H^2(S,\ZZ)$.

In view of the exact sequence
$$ 1 \ra \Tors(S) \cong (\ZZ/2)^3\times (\ZZ/4)^2 \ra H^2(S,\ZZ) \ra \Num(S) = H^2(S,\ZZ)/ \Tors(S) \cong \ZZ^2\ra 0,$$
and since  $(r^2,0)$ acts trivially on $\Tors(S) $, and on the quotient group $\Num(S)$,
it suffices to find a splitting $\Num(S) \ra H^2(S,\ZZ)$ which is  $(r^2,0)$-invariant.

As in \cite{ccck} we consider the two fibrations $f_i : S \ra C_i /G$, and denote by $F_i$ the respective fibres,
and argue as follows.

Because of the multiplicities of the fibres, we find $Z(G)$ invariant divisors $\Phi_1, \Phi_2$ such that 
the following linear equivalences hold true:
$ F_1 \equiv 4 \Phi_1, F_2 \equiv  2 \Phi_2$.

Observe that $\Phi_1 \cdot \Phi_2 = |G|/ 8 = 2.$

Moreover $K_S$ is numerically equivalent to $\Phi_1 + 2 \Phi_2$, hence  the intersection form on $S$ is even
if the numerical class of $\Phi_1$ is divisible by 2, and then if $ 2 \Phi'_1$ is numerically equivalent to
$\Phi_1$, then $\Phi'_1 , \Phi_2$ are a basis of $\Num (S)$, with $\Phi'_1 \cdot \Phi_2 =1, (\Phi'_1)^2 = \Phi_2^2 =0$.

To find $\Phi'_1$, consider the surjection $ G = D_4 \times \ZZ/2 \twoheadrightarrow (\ZZ/2)^3$, and its kernel $H
= \langle r^2 \rangle \cong \ZZ/2$.

Then, setting $$B = C_1 / (\ZZ/2) ,  E : = C_1 / H,$$
 $B$ is a $D_4$-covering of $\PP^1$
with branching multiplicities $(2,2,4)$, hence $B$ is a projective line, while $E$ is an elliptic curve, a 
$(\ZZ/2)^3$ covering of $\PP^1$ branched in 4 points, and $C_1 \ra E$ is branched in 4 points.

Hence $C: = C_1$ is a hyperelliptic curve of genus $3$, and $ K_C  = 2 \sH$, where $\sH$ is the hyperelliptic divisor
(inverse image of a point in $B$), whose linear equivalence class  is invariant for every automorphism of $C$.

We consider then  a divisor  $ \hat{\Phi}_1$ on $C_1 \times C_2$ corresponding to the 
divisor $  \sH$ on $C_1$.
More precisely, consider the $G$-orbit $\Oh$ in $C$ corresponding to the last branch point of $ C \ra \PP^1$,
which is indeed the $G / \langle r\rangle \cong \ZZ/2 \times \ZZ/2$-orbit.

$ \Oh = \{ P, (0,1)P, (s,0) P, (s,1)P\}$ consists of the ramification points for $C \ra E$, hence
$$K_C \equiv (P+ (0,1)P) + ( (s,0) P +  (s,1)P) \equiv \sH + \sH,$$
since $\sH$ is the pull back of a point in $B \cong \PP^1$.

In particular, $$(P+ (0,1)P) \equiv  ( (s,0) P +  (s,1)P).$$

On $C_1 \times C_2$ consider the divisor $ \hat{\Phi}_1: = P \times C_2 + (0,1)P\times C_2.$

For each automorphism in $ G \times G$
the divisor $ \hat{\Phi}_1$ is either left fixed, or sent to $(s,0)  \hat{\Phi}_1$: then 
the previous linear equivalence shows that the linear equivalence class of 
$ \hat{\Phi}_1$ is left invariant.

By our choice, the effective divisor $ \hat{\Phi}_1$ on $C_1 \times C_2$ is invariant for the action of the
group
 $G'': = \langle r \rangle \times \ZZ/2$, hence the corresponding line bundle has a $G''$-linearization,
 therefore $ \hat{\Phi}_1$ descends to a divisor $\Phi''_1$ on $S'' : = (C_1 \times C_2) / G''$
 of which it is the inverse image.

 Likewise $ \hat{\Phi}_1 + (s,0)  \hat{\Phi}_1$ is $G$-invariant, and is the full inverse image of
 $\Phi_1$.
 
 We have an unramified double cover $ p: S'' \ra S = S'' / s$, and both $\Phi''_1, s \Phi''_1$ 
 satisfy  $$p_* (\Phi''_1 ) = p_*  (s \Phi''_1) = \Phi_1.$$

 Now, consider the line bundle $\sL : = \hol_{S''}( \Phi''_1)$. Then, since also $ (s,0)  \hat{\Phi}_1$ is $G''$-invariant
 and linearly equivalent to $\hat{\Phi}_1$, 
 $ s^*(\sL) \cong \sL$,
 and we can consider the associated Theta group 
 $$ 1 \ra \CC^* \ra \Theta(\sL) \ra \ZZ/2 = \langle s \rangle  \ra 1,$$
 which is classified as a central extension by $\e \in H^2(\ZZ/2 , \CC^*) = 1.$
 
 Hence the Theta group splits and there is a a lifting of $s$ to an automorphism of $\sL$,
 which yields a quotient line bundle on $S$, hence a  divisor class $\Phi'_1$ on $S$ such that $p^*(\Phi'_1) \equiv \Phi''_1.$
 
 But then $2 \Phi'_1 = p_*(p^*(\Phi'_1)) \equiv p_*( \Phi''_1) = \Phi_1.$

\end{proof}
\begin{remark}
That the intersection form is even for $D_4 \times \ZZ/2$, this was proven also by Kyoung-Seog-Lee
and his collaborators (personal communication).
\end{remark}


\section{Surfaces with $q \geq 1$ and  nontrivial $\Aut_{\QQ}(S)$}\label{irregular}

\subsection{Irregularity $q=2$}
In the paper \cite{clz} it was shown that, for a surface $S$ with $q(S) \geq 2$, $\Aut_{\QQ}(S)$ is trivial, 
unless $q(S)=2$ and the surface $S$ is isogenous to a product of unmixed type, with $C'_j = C_j /G$
having genus $1$ for $j=1,2$,  the group $G$ is abelian with 
$$G = (\ZZ/2m) \oplus (\ZZ/2mn), \quad {\rm or } \quad \ G = (\ZZ/2) \oplus (\ZZ/2m) \oplus (\ZZ/2mn),$$
and all the branch multiplicities are equal to $2$. 

Using the notation of \cite{clz} we let  the number of branch points be $2k$ for the first covering, and $2l$ for the second covering.
Moreover, the local monodromies of the first covering  are all equal to $\ga$, while the local monodromies of the second covering all are equal to $\ga'$, with $\ga \neq \ga'$. 

If $k \geq 2$, then $N'_1$ is trivial, similarly $N'_2$ is trivial if $l \geq 2$.

In loc. cit. it was proven that $\Aut_{\QQ}(S)$ is the involution $\iota$ induced by $\ga \times \ga' \in G \times G$.

We shall now see whether $\iota$ belongs to $\Aut_{\ZZ}(S)$.

\begin{question}
If $S$ is an irregular surface with $q(S)=2$, then can $\Aut_{\ZZ}(S) $ be nontrivial?
Equivalently, is  it possible that $  \Aut_{\ZZ}(S) = \langle \iota \rangle \cong \ZZ/2  $?
\end{question}

The first author, in joint work with Gromadtzki in summer 2015 (\cite{cg2}, or handwritten notes by the first author), has studied the action of $\iota$
on $H_1(S,\ZZ)$, showing that for certain values of $m,n$ the action is trivial, and for others it is nontrivial
(for the first case, triviality amounts to $m$ being even).

The difficulty is however to determine  the triviality of the action on $H^2(S, \ZZ)$, because the 
above surfaces have large  $p_g$. 

\subsection{Irregularity $q=1$}
The paper \cite{CL18} showed that for a surface $S$ with $q(S) =1$, 
if $\Aut_{\QQ}(S)$ is non-trivial, then it has order at most 4, and that if equality holds, then
 $S$ is isogenous to a product of unmixed type; the authors  also give examples where
$\Aut_{\QQ}(S)$ has order 4.

If $q=1$, then any element $ h \in \Aut_\ZZ(S)$ acts as the identity on the elliptic curve $A = \Alb(S)$, by 
\cite[Lemma 2.1]{CFGLS24}.

  Assume now that $S$ is isogenous to a product of unmixed type:  is the case  
$|\Aut_\ZZ(S)|=4$, $q(S)=1$  possible?

The surface $S$ admits a second fibration $ f : S \ra C_2/G = \PP^1$, which has 
 multiple fibres of multiplicities $m_1 \leq m_2\leq  \dots \leq m_r$ with $r \geq 3$, and $h$ acts as the identity on
$C_2/G$, by  \cite[Principle 3]{CatLiu21} unless $m_1=m_2=2$, $m_j$ is odd for $j=3,4$,  $r = 4$ and
the action on $C_2/G$ has order 2.

Hence, if $\Aut_{\ZZ}(S)$ had order 4, there would be an element $h=(h_1,h_2)$ acting as the identity on
$C_2/G$,  and acting as the identity on $C_1$, so that  necessarily  $h_2 \in Z(G)$.

But, again, how to deal with the transcendental cycles in $H^2(S, \ZZ)$?

\bigskip

 

\appendix

\section{MAGMA scripts}\label{script}

The script can be run at \url{http://magma.maths.usyd.edu.au/calc/}.\\

\begin{code_magma}
/* Input: i) the group: G;  ii) the monodromy images of pi_orb(C_i)-> G: mon1 and mon 2;  
iii) the genus of the quotient curves C_i/G: h1 and h2;

Output: the elements of Z(G), acting trivially on H_1(S,Z)*/

// Given the monodromy images, the next function constructs 
// the group pi^orb and the monodromy map pi^orb-->>G

Orbi:=function(gr,mon, h)
F:=FreeGroup(#mon);  Rel:={}; G:=Id(F);
for i in {1..h} do G:=G*(F.(2*i-1)^-1,F.(2*i)^-1); end for;
for i in {2*h+1..#mon} do G:=G*F.(i); Include(~Rel,F.(i)^(Order(mon[i])));  end for;
Include(~Rel,G); 
P:=quo<F|Rel>;
return P, hom<P->gr|mon>;
end function;

// MapProd computes given two maps f,g:A->B the map product induced by the product on B

MapProd:=function(map1,map2)
seq:=[];
A:=Domain(map1); 
B:=Codomain(map1);
if Category(A) eq GrpPC then 
	n:=NPCgens(A);
	else n:=NumberOfGenerators(A); 
end if;
for i in [1..n] do Append(~seq, map1(A.i)*map2(A.i)); end for;
return hom<A->B|seq>;
end function;

// This is the main routine of the script

TrivialActionH1:=function(G,mon1, h1, mon2, h2)
// First of all we construct the monodromy maps  pi_j^orb-->>G

T1,f1:=Orbi(G,mon1, h1);
T2,f2:=Orbi(G,mon2, h2);

T1xT2,inT,proT:=DirectProduct([T1,T2]);
GxG,inG:=DirectProduct(G,G);
Diag:=MapProd(inG[1],inG[2])(G);
f:=MapProd(proT[1]*f1*inG[1],proT[2]*f2*inG[2]);
Pi1S:=Rewrite(T1xT2,Diag@@f); // This is the fundamental group of S=(C1xC2)/G
H1S,q:=AbelianQuotient(Pi1S); //H_1(S,Z)

triv:={G!1};

for z in Center(G) do    // we consider the automorphisms in Z(G) and their action on H_1(S,Z)
	act:={}; // in act we collect the differences z*h-h, where h is a generator
	 of H_1(S,Z)
	z1:=(inG[1](z)@@f);  // lifts of z=(z,1) to the productT1xT2
	for h in Generators(H1S) do
		h1:=h@@q;     Include(~act,q(z1*h1*z1^-1)-h);  // con
	end for;  
	// check if g in Z(G)=Aut_Q(S), acts trivially on  H_1(S,Z)
	if act eq {H1S!0} then Include(~triv,z); end if;	
end for; 

return triv; 
end function;

\end{code_magma}

\subsection{The case $p_g=q=0$}

We use the previous script, to check, which  elements of $Z(G)$ act non-trivially on $H_1(S,\ZZ)$, for the algebraic data listed in Remark \ref{monodromies0}.

\begin{code_magma}
G:=SmallGroup(48,48); // S4 x Z/2
mon1:=[ G.1 * G.4 * G.5, G.1 * G.2 * G.3^2 * G.5, G.2 * G.3 ];
mon2:=[ G.2 * G.5, G.2 * G.4 * G.5, G.1 * G.2 * G.3^2 * G.4, G.1 * G.2 * G.3 * G.5, 
G.1 * G.2 * G.3 * G.5, G.1 * G.2 * G.3^2 ];
TrivialActionH1(G,mon1,0,mon2,0);
> { Id(G) }

G:=SmallGroup(32,27); // G(32)
mon1:=[ G.2 * G.3 * G.4, G.2, G.1 * G.2 * G.3  * G.5, G.1 * G.2 ];
mon2:=[ G.1 * G.4 * G.5, G.2 * G.3 * G.4 * G.5, G.2 * G.4 * G.5, G.1 * G.3 * G.4];
TrivialActionH1(G,mon1,0,mon2,0);
> { Id(G) }

G:=SmallGroup(25,2);   // (Z/5)^2
mon1:=[ G.1, G.2, G.1^4 * G.2^4 ];
mon2:=[ G.1*G.2^4, G.1 * G.2^2, G.1^3 * G.2^4 ];
TrivialActionH1(G,mon1,0,mon2,0);
> { Id(G) }

G:=SmallGroup(16,3); // G(16)
mon1:=[ G.2, G.2, G.1, G.1^3 ];
mon2:=[ G.2 * G.1^2* G.3 , G.2 * G.1^2* G.3, G.1*G.3*G.2, (G.1*G.3*G.2)^-1 ];
TrivialActionH1(G,mon1,0,mon2,0);
> { Id(G) }

G:=SmallGroup(16,11); // D4 x Z/2
mon1:=[ G.2 * G.3, G.3 * G.4, G.1 * G.3, G.1 * G.2 * G.3 * G.4 ];
mon2:=[ G.1 * G.4, G.1 * G.4, G.3, G.3, G.2 * G.4, G.2 * G.4 ];
TrivialActionH1(G,mon1,0,mon2,0);
> { Id(G), G.4 } // G.4=(r^2,0) acts trivially

G:=SmallGroup(16,14); // (Z/2)^4
mon1:=[ G.1, G.2, G.3, G.4, G.1 * G.2 * G.3 * G.4 ];
mon2:=[ G.2 * G.3 * G.4, G.1 * G.3 * G.4, G.1 * G.3, G.2 * G.4, G.3 * G.4 ];
TrivialActionH1(G,mon1,0,mon2,0);
> { Id(G) }

G:=SmallGroup(9,2); // (Z/3)^2
mon1:=[ G.1, G.2, G.1^2, G.2^2 ];
mon2:=[ G.1 * G.2, G.1 * G.2^2, G.1^2 * G.2^2, G.1^2 * G.2 ];
TrivialActionH1(G,mon1,0,mon2,0);
> { Id(G) }

G:=SmallGroup(8,5); // (Z/2)^3
mon1:=[ G.1 * G.2, G.1 * G.3, G.2 * G.3, G.1 * G.2 * G.3, G.1 * G.2 * G.3  ]; 
mon2:=[ G.1, G.2, G.3, G.1, G.2, G.3 ];
TrivialActionH1(G,mon1,0,mon2,0);
> { Id(G) }

\end{code_magma}


\end{document}